\documentclass[11pt, a4paper]{article}

\usepackage[utf8]{inputenc}
\usepackage[english]{babel}
\usepackage{amsmath,amssymb,amsthm}
\usepackage{xcolor}
\usepackage{tikz}
\usetikzlibrary{arrows.meta, positioning, calc}
\usepackage{caption}
\usepackage{subcaption}
\usepackage{graphicx}
\usepackage[margin=1.2in]{geometry}
\usepackage{hyperref}

\newtheorem{theorem}{Theorem}[section]

\newtheorem{corollary}[theorem]{Corollary}

\newtheorem{conjecture}[theorem]{Conjecture}
\theoremstyle{definition}

\theoremstyle{remark}
\newtheorem{remark}[theorem]{Remark}

\newcommand{\Z}{\mathbb{Z}}

\title{Extinction and Survival for a Generalized Contact Process with Deterministic Cures}

\author{
  Nancy L.~Garcia\thanks{Universidade Estadual de Campinas (UNICAMP), Brazil. Email: nancyg@unicamp.br, denisalu@unicamp.br}
  \and
  Denis A.~Luiz\footnotemark[1]
  \and
  Daniel M.~Machado\thanks{Universidade Federal do ABC (UFABC), Brazil. Email: daniel.miranda@ufabc.edu.br}
}

\date{September 24, 2026} 

\begin{document}

\maketitle

\begin{abstract}
We study a generalized contact process on \(\Z^d\) parameterized by an infection rate \(\lambda\) and a resetting probability \(p \in [0,1]\), modeling deterministic cure times. Once a vertex is infected, its recovery is scheduled exactly one time unit later. Incoming attempts to an already-infected vertex successfully reset its recovery clock with probability \(p\), and are ignored otherwise. This unifies spatial versions of classical Type I (\(p=0\), non-paralyzable) and Type II (\(p=1\), paralyzable) counters. Except in the fully resetting case, deterministic recovery deadlines destroy coordinatewise attractiveness and create a causal shielding effect.

Using a first-moment bound on potential causal chains, we prove that the process dies out from finite configurations whenever \(\lambda<1/(2d)\), uniformly in \(p\in[0,1]\). The same causal-chain bound yields local convergence to the empty configuration from arbitrary initial states. In dimensions \(d \ge 2\), an oriented percolation exploration based on the first infective window establishes global survival for all \(p\in[0,1]\) when \(\lambda>-\log(1-p_c^{\mathrm{or}})\). Finally, in the purely non-resetting case \(p=0\), we derive a delayed identity for the one-site density and prove that this density remains strictly between zero and one at every finite time.

\vspace{0.5cm}
\noindent\textbf{Keywords:} contact process; deterministic cure; phase transition; oriented percolation; particle counters \\
\noindent\textbf{MSC2020:} 60K35; 82B43
\end{abstract}

\section{Introduction}

We consider a generalized process \((\eta_t)_{t\geq0}\) on \(\Z^d\), with states \(0\) and \(1\) denoting healthy and infected vertices, respectively. Infection attempts are carried by independent rate \(\lambda\) Poisson processes \(\mathcal N^{x,y}\) on oriented nearest-neighbor edges \((x,y)\). An arrow \(x\to y\) at time \(t\) can act only when its source is infected, that is, when \(\eta_{t-}(x)=1\).

The defining feature of this model is a deterministic recovery time governed by a resetting probability parameter \(p \in [0,1]\). When a healthy vertex \(x\) receives an infection attempt, it becomes infected and is scheduled to recover exactly one time unit later. If \(x\) is already infected, a subsequent incoming attempt triggers an independent Bernoulli trial: with probability \(p\), the attempt is accepted, resetting the recovery clock to exactly one time unit after the arrival; with probability \(1-p\), the attempt is ignored.

Unless explicitly stated otherwise, every site infected at time \(0\) is scheduled to recover at time \(1\).

This framework provides a spatial analogue of classical models from the theory of counters and renewal processes \cite{feller1948, takacs1958}:
\begin{itemize}
    \item \(\mathbf{p=0}\) \textbf{(Type I Counter):} The Contact Process with Deterministic Cure (CPDC) or pure non-resetting case. An infected vertex ignores all subsequent attempts and recovers exactly one time unit after its initial infection. This corresponds to the non-paralyzable counter model.  In dimension \(d=1\), after a deterministic rescaling of time, this coincides with the fixed-lifetime non-Markovian contact process considered in \cite{gerami2002criticality}.
    \item \(\mathbf{p=1}\) \textbf{(Type II Counter):} The Contact Process with Restarting Deterministic Cure (CPRDC) or the full resetting case. Every incoming attempt successfully resets the clock. A vertex recovers only after a full interval of length \(1\) without receiving any effective infection attempt from an infected neighbor. This corresponds to the paralyzable counter model.
\end{itemize}

A fundamental difficulty for the pure CPDC (\(p=0\)) and the intermediate models (\(0<p<1\)) is the failure of attractiveness. An earlier infection may impose an earlier recovery time and thereby prevent subsequent reset attempts from taking effect, as illustrated in Figure~\ref{fig:graphical_coupling}. Consequently, a larger initial infected set need not produce a larger infected set at later times. The CPDC is not attractive, and the natural graphical coupling is not
pathwise monotone in either \(\lambda\) or \(p\).

\begin{figure}[!ht]
    \centering
    \begin{tikzpicture}[scale=0.6, transform shape, xscale=1.5, yscale=1.2]
        \draw[->, thick] (0,0) -- (0,5) node[above] {time};
        \draw[-, thick] (0,0) -- (4,0);
        \foreach \x in {1,2,3} {
                \draw[dashed, gray] (\x,0) -- (\x,4.5);
            }
        \draw[-{Latex[length=2mm]}, red, thick] (2,1) -- (1,1);
        \draw[-{Latex[length=2mm]}, red, thick] (3,2.5) -- (2,2.5);
        \draw[-{Latex[length=2mm]}, red, thick] (2,1.5) -- (3,1.5);
        \draw[-{Latex[length=2mm]}, red, thick] (2,3.5) -- (1,3.5);
        \draw[line width=3pt, blue] (1,1) -- (1,2.7);
        \draw[line width=3pt, blue] (2,2.5) -- (2,4.2);
        \draw[line width=3pt, blue] (2,0) -- (2,1.6);
        \draw[line width=3pt, blue] (3,1.5) -- (3,3.2);
        \draw[line width=3pt, blue] (1,3.5) -- (1,4.5);
    \end{tikzpicture}
    \begin{tikzpicture}[scale=0.6, transform shape, xscale=1.5, yscale=1.2]
        \draw[->, thick] (0,0) -- (0,5) node[above] {time};
        \draw[-, thick] (0,0) -- (4,0);
        \foreach \x in {1,2,3} {
                \draw[dashed, gray] (\x,0) -- (\x,4.5);
            }
        \draw[-{Latex[length=2mm]}, red, thick] (2,1) -- (1,1);
        \draw[-{Latex[length=2mm]}, red, thick] (3,2.5) -- (2,2.5);
        \draw[-{Latex[length=2mm]}, red, thick] (2,1.5) -- (3,1.5) node[midway,below] {$\times$};
        \draw[-{Latex[length=2mm]}, red, thick] (2,3.5) -- (1,3.5);
        \draw[-{Latex[length=2mm]}, purple, dashed] (2,0.5) -- (3,0.5);
        \draw[line width=3pt, blue] (1,1) -- (1,2.7);
        \draw[line width=3pt, blue] (2,0) -- (2,1.6);
        \draw[line width=3pt, blue] (3,0.5) -- (3,2.2);
    \end{tikzpicture}
    \caption{Graphical construction of the non-resetting CPDC ($p=0$). Thick blue lines indicate deterministic infection periods of length 1, while red arrows represent rate $\lambda$ infection attempts. Note the loss of attractiveness: adding the dashed early infection arrow causes the receiving site to be actively infected when the subsequent arrow arrives. The site therefore ignores this later attempt (marked with $\times$), ultimately reducing the total epidemic duration in this realization. For $p>0$, this shielding effect occurs whenever the independent Bernoulli reset trial fails.}
    \label{fig:graphical_coupling}
\end{figure}

The fully resetting process (\(p=1\)) dominates every process with parameter \(p\) under the natural graphical coupling, for every $\lambda>0$. Indeed, every reset accepted for \(p\) is
also accepted for \(p=1\), while additional resets can only extend the
recovery deadline in the latter. This ordering is preserved at each
graphical event, and hence \(\eta_t^p\subseteq\eta_t^1\) for all \(t\geq0\).

For fixed rate $\lambda>0$ and probability parameter $p\in[0,1]$, we denote by CPDC\((\lambda,p)\) the generalized contact process with deterministic cures, reserving CPDC and CPRDC for the endpoint cases \(p=0\) and \(p=1\), respectively.
 For a finite initial infected set, let the extinction time be
 \(
 \tau:=\inf\{t\geq0:\eta_{t}^p=\varnothing\}.
 \)
 Since the empty configuration is absorbing, we say that the process survives globally when \(\mathbb P(\tau=\infty)>0\).

We now collect the main statements of the paper.

\begin{theorem}[Main Results]
\label{thm:main_results}
Consider the CPDC\((\lambda,p)\) on \(\mathbb Z^d\).
\begin{enumerate}
    \item \textbf{Extinction from finite initial configurations.} If
    \[
        \lambda<\frac{1}{2d},
    \]
    then the process dies out almost surely from every finite initial configuration, for every \(p\in[0,1]\).

    \item \textbf{Local extinction from arbitrary initial configurations.}
    Under the same condition, for every initial binary configuration \(\xi\in\{0,1\}^{\mathbb Z^d}\),
    \[
        \eta_t^{\xi}\Longrightarrow\delta_\emptyset
        \qquad\text{as }t\to\infty
    \]
    in the local topology.

    \item \textbf{Global survival.} Assume \(d\geq2\). If
    \[
        \lambda>-\log(1-p_c^{\mathrm{or}}),
    \]
    where \(p_c^{\mathrm{or}}\) is the critical parameter for oriented bond percolation on \(\Z_+^2\) \cite{durrett1984oriented}, the process survives globally with positive probability from every finite nonempty initial configuration, for all \(p \in [0,1]\).

    \item \textbf{Delayed density identity and strict confinement (\(p=0\)).} For the purely non-resetting case (\(p=0\)), let the process start from a Bernoulli product measure with density \(\rho_{0}\in(0,1)\). For any fixed nearest-neighbor pair \(y\sim x\), let \(\rho(t)=\mathbb P[\eta_t(x)=1]\) and \(\rho_{10}(t)=\mathbb P[\eta_t(y)=1,\eta_t(x)=0]\). Then \(\rho\) is locally absolutely continuous on \((1,\infty)\) and, for every \(\lambda>0\) and Lebesgue-a.e. \(t>1\),
    \[
        \rho'(t)=2d\lambda \rho_{10}(t)-2d\lambda \rho_{10}(t-1).
    \]
    Moreover, the density is strictly confined: \(0<\rho(t)<1\) for every \(t>0\).
\end{enumerate}
\end{theorem}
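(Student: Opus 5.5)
We prove the four items separately, each with a tool suited to it.

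\textbf{Items 1 and 2.} For both we use a first-moment bound on causal chains in the graphical construction. Any site infected at time \(t\) must be reachable through a chain \(x_0\to x_1\to\dots\to x_n=x\) of arrows at times \(0\le s_1<\dots<s_n\le t\), with \(x_0\in\eta_0\). Each site is infected only during windows of length \(1\) that end no later than one unit after the last accepted arrow into it. So along any effective chain, \(s_{k+1}-s_k\le 1\) for every \(k\), \(s_1\le 1\), and \(t-s_n\le 1\). This constraint ignores all shielding and resets, so it holds for every \(p\). The expected number of such chains of length \(n\) started from one site is at most
\[
(2d\lambda)^n\,\mathrm{Leb}\{0<s_1<\dots<s_n:\ s_1\le1,\ s_{k+1}-s_k\le1\}\;\le\;(2d\lambda)^n,
\]
since the increments each range over \([0,1]\). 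For a chain to reach time \(t\) it needs \(n\ge t-1\). Summing the geometric tail gives \(\mathbb P(\eta_t\ne\varnothing)\le |\eta_0|\sum_{n\ge t-1}(2d\lambda)^n\to0\) when \(2d\lambda<1\), and then \(\tau<\infty\) almost surely.

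For item 2 we reverse the arrows. We bound \(\mathbb P(\eta^\xi_t(x)=1)\) by the expected number of backward chains from \((x,t)\) to time \(0\). The count is the same, independent of \(\xi\), and tends to \(0\). Local convergence then follows by a union bound over any finite cylinder set.

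\textbf{Item 3.} We work in a two-dimensional sublattice \(\{(a,b,0,\dots)\}\subset\Z^d\) and set up an oriented percolation exploration. Sites with \(a+b=n\) form level \(n\), and we only use arrows \(x\to x+e_1\) and \(x\to x+e_2\). Consider a site \(x\) at level \(n\) that is infected by the exploration at time \(\sigma_x\). Its first infective window is \([\sigma_x,\sigma_x+1)\), and this window has length \(1\) regardless of \(p\). We declare the bond \(x\to x+e_i\) open if \(\mathcal N^{x,x+e_i}\) has a point in that window. This has probability \(1-e^{-\lambda}\), and \(1-e^{-\lambda}>p_c^{\mathrm{or}}\) by hypothesis.

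If the bond is open, the target is certainly infected at some time, either by this arrow or already earlier. The key point is that if \(x+e_i\) was already infected, then it was infected, and we use its first window as explored. Bonds out of distinct sites use disjoint Poisson processes on disjoint edges, and each window depends only on the past. Hence the bond variables can be coupled with (or dominate) independent Bernoulli\((1-e^{-\lambda})\) bonds, explored level by level as in the standard construction. An infinite open oriented path then yields infinitely many infections. Since each infection lasts at most one time unit between effective chains, this forces \(\tau=\infty\). Nonempty finite initial sets contain a site from which we start.

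\emph{Main obstacle.} The delicate point is independence when a site can be reached by two parents. Its "first window" is determined by the earliest effective arrival, which depends on arrows from both parents. I would handle this with the standard sequential exploration. Examine bonds in increasing order of the time of the source's first window, and note that each examined Poisson segment is fresh, because each edge is used only from its unique source's first window. The conditional probability of openness given the past is then exactly \(1-e^{-\lambda}\), which gives stochastic domination of oriented percolation.

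\textbf{Item 4.} Take \(p=0\) and \(t>1\). The site \(x\) becomes infected at rate \(\lambda\) times the number of infected neighbours while it is healthy, which gives the term \(2d\lambda\rho_{10}(t)\) by translation invariance and symmetry. It recovers at time \(t\) exactly when it was infected at time \(t-1\). Because of the deterministic lifetime, the recovery rate at \(t\) equals the infection rate at \(t-1\), namely \(2d\lambda\rho_{10}(t-1)\). Writing
\[
\rho(t)=\rho(1)+\int_1^t\bigl(2d\lambda\rho_{10}(s)-2d\lambda\rho_{10}(s-1)\bigr)\,ds
\]
via the compensator of the Poisson arrivals gives absolute continuity and the identity. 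We need \(t>1\) so that the initial particles, all recovering at time \(1\), have left.

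For strict confinement we use the graphical construction. Positivity: \(x\) has an infected neighbour \(y\) at time \(0\) with positive probability, and an arrow \(y\to x\) lands in \((t-1,t)\cap(0,1)\) or can be relayed by a finite chain of arrows, while nothing contradicts it; this event has positive probability. For \(\rho(t)<1\), the event that no arrow enters a finite neighbourhood during \([0,t]\) and \(x\) is healthy at \(0\) has positive probability, and on it \(\eta_t(x)=0\).
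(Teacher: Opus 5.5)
Your outline is sound, and for Items 1--3 it follows the paper's strategy. Items 1--2 use a first-moment count of causal chains with inter-arrow gaps at most $1$, giving $(2d\lambda)^n$ per initial site. Item 3 uses oriented bond percolation on $\{z+ae_1+be_2\}$, read off from the first infective window $[S_1^x,S_1^x+1)$, which resets cannot shorten. The differences are in packaging, and in Item 4.

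\emph{Item 1.} You bound $\mathbb P(\eta_t\neq\varnothing)\le|A|\sum_{n\ge t-1}(2d\lambda)^n$. The paper instead shows that only finitely many deadline-setting arrows are reachable and takes the last one. This is the same computation, and yours gives an exponential tail for $\tau$. For $p>0$, the gap constraint holds for the chain obtained by always tracing back the \emph{most recent} deadline-setting arrow. It does not hold for an arbitrary chain of effective arrows.

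\emph{Item 3.} You keep absolute-time clocks and get independence from a sequential exploration, using the strong Markov property at the stopping times $S_1^x$. To make this airtight, apply it in the filtration enlarged by the full paths of all other edges, because revealed windows of earlier sources extend past $S_1^x$. The paper instead pre-assigns episode-age clocks $\mathcal N^{x,k,y}$. Then the $W_{x,i}$ are i.i.d.\ by construction, at the price of checking that this construction has the CPDC law.

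\emph{Item 4.} Your balance, ``infections in $(1,t]$ minus recoveries in $(1,t]$'' starting from $\rho(1)$, is equivalent to the paper's one-window formula $\rho(t)=2d\lambda\int_{t-1}^{t}\rho_{10}(s)\,ds$. For $\rho(t)>0$ you use a direct local event. The paper instead combines that integral identity with the ergodic equivalence $\rho_{10}(s)=0\iff\rho(s)\in\{0,1\}$, continuity, and a backward iteration down to $(0,1)$. Your route is more elementary, avoids ergodicity, and gives an explicit lower bound.

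Two steps need repair.

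First, at the end of Item 3, the reason you give for $\tau=\infty$ (``each infection lasts at most one time unit'') is not the relevant fact, and it is false for $p>0$. What you need is non-explosion. By domination with a Yule process of rate $2d\lambda$, only finitely many sites are infected by any finite time. So an infinite open cluster forces unbounded first-infection times, and hence $\eta_t\neq\varnothing$ for all $t$.

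Second, in the positivity argument for $t\ge 1$, the phrase ``while nothing contradicts it'' hides exactly the shielding problem. If a relay site is infected earlier by another neighbour, it ignores the planned arrow and may recover before the relay is due. The event must therefore isolate the relay sites. For instance, take $y\sim x$ with $\eta_0(y)=1$ and $\eta_0(x)=0$, and require:
\begin{itemize}
\item no arrow enters $\{x,y\}$ from outside during $(0,t]$;
\item on $(0,t]$, the arrows $y\to x$ are exactly $a_1,a_3,\dots$ and the arrows $x\to y$ are exactly $a_2,a_4,\dots$;
\item each $a_k$ lies in a small prescribed interval, with the intervals chosen so that $a_1\in(0,1)$, $a_{k+1}\in(a_{k-1}+1,a_k+1)$ (where $a_0:=0$), and the last point lies in $(t-1,t]$.
\end{itemize}
This event has positive probability. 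On it the pair evolves as a closed ping-pong: each arrow finds its source infected and its target healthy. Hence one of $x,y$ is infected at time $t$, and translation invariance gives $\rho(t)>0$.
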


\begin{remark}[Space-time coupling for \(p>0\) and \(d=1\)]\label{rmk:fractional_window}
A simpler space-time coupling establishes survival for \(p>0\), as illustrated in Figure~\ref{fig:global_survival_coupling}. Splitting time into half-integer intervals, retain only arrows whose
Bernoulli marks are successful. If the source remains active throughout
\((a,a+1/2]\), any retained arrow from it during this interval either
infects a healthy target or resets an infected target, and therefore
guarantees that the target remains active throughout the subsequent
half-window \((a+1/2,a+1]\). This directly embeds discrete-time oriented percolation (OP) on \(\Z^d \times \Z_+\) with parameter \(1-e^{-p\lambda/2}\). Because time acts as a coordinate in the oriented percolation lattice, this yields a survival regime even in one spatial dimension (\(d=1\)).

However, this fractional coupling fails completely for the pure CPDC (\(p=0\)), as earlier infections can shield the vertex and expire before the required transmission window. It also yields only a weaker sufficient condition for survival. The dynamical exploration in Section~\ref{sec:survival} overcomes both limitations solving the \(p=0\) case and recovering a better parameter \(1-e^{-\lambda}\), at the cost of requiring \(d \geq 2\) spatial dimensions.
\end{remark}

\begin{figure}[h!]
    \colorlet{myblue}{blue}
    \colorlet{myred}{red}
    \colorlet{boldgreen}{green!60!black}
    \colorlet{poissongray}{gray!50} 
    \colorlet{softgrid}{gray!40}
    \colorlet{tline}{gray!60}
    \centering
    \begin{subfigure}[b]{0.32\textwidth}
        \centering
        \begin{tikzpicture}[scale=0.55, transform shape, xscale=1.1, yscale=1.2]

            \draw[->, thick] (0,0) -- (0,4.2) node[above] {time};
            \draw[-, thick] (0,0) -- (5.5,0);
            
            \node[below] at (1,0) {\small $-2$};
            \node[below] at (2,0) {\small $-1$};
            \node[below] at (3,0) {\small $0$}; 
            \node[below] at (4,0) {\small $1$};
            \node[below] at (5,0) {\small $2$};
            \foreach \x in {1,...,5} {
                \draw[dashed, softgrid] (\x,0) -- (\x,4);
            }
            
            \foreach \x in {1/2,1,3/2,2,5/2,3,7/2} {
                \draw[dashed, tline, thick] (0.5,\x) -- (5.5,\x) node[right, text=black] {\small $t=\x$};
            }

            \draw[line width=3.5pt, myblue] (3,0) -- (3,1);
            \draw[line width=3.5pt, myblue] (2,0.2) -- (2,1.2);
            \draw[line width=3.5pt, myblue] (4,0.35) -- (4,1.35);
            \draw[line width=3.5pt, myblue] (2,0.6) -- (2,1.6);
            \draw[line width=3.5pt, myblue] (1,0.3) -- (1,1.3);
            \draw[line width=3.5pt, myblue] (1,1.15) -- (1,2.15);
            \draw[line width=3.5pt, myblue] (5,0.9) -- (5,1.9);
            \draw[line width=3.5pt, myblue] (4,1.3) -- (4,2.3);
            \draw[line width=3.5pt, myblue] (3,1.8) -- (3,2.8);
            \draw[line width=3.5pt, myblue] (3,2.2) -- (3,3.2);

            \draw[-{Latex[length=1.5mm]}, poissongray, thick] (1,1.3) -- (2,1.3);
            \draw[-{Latex[length=1.5mm]}, poissongray, thick] (2,0.5) -- (1,0.5);
            \draw[-{Latex[length=1.5mm]}, poissongray, thick] (2,2.1) -- (3,2.1);
            \draw[-{Latex[length=1.5mm]}, poissongray, thick] (2,3.3) -- (1,3.3);
            \draw[-{Latex[length=1.5mm]}, poissongray, thick] (3,1.1) -- (2,1.1);
            \draw[-{Latex[length=1.5mm]}, poissongray, thick] (3,2.7) -- (4,2.7);
            \draw[-{Latex[length=1.5mm]}, poissongray, thick] (4,0.7) -- (5,0.7);
            \draw[-{Latex[length=1.5mm]}, poissongray, thick] (4,1.8) -- (3,1.8);
            \draw[-{Latex[length=1.5mm]}, poissongray, thick] (4,3.8) -- (5,3.8);
            \draw[-{Latex[length=1.5mm]}, poissongray, thick] (5,2.4) -- (4,2.4);

            \draw[-{Latex[length=2mm]}, myred, thick] (3,0.35) -- (4,0.35);
            \draw[-{Latex[length=2mm]}, myred, thick] (3,0.2) -- (2,0.2); 
            \draw[-{Latex[length=2mm]}, myred, thick] (5,1.3) -- (4,1.3);
            \draw[-{Latex[length=2mm]}, myred, thick] (2,0.3) -- (1,0.3); 
            \draw[-{Latex[length=2mm]}, myred, thick] (4,2.2) -- (3,2.2); 
            \draw[-{Latex[length=2mm]}, myred, thick] (2,1.15) -- (1,1.15); 
            \draw[-{Latex[length=2mm]}, myred, thick] (2,2.4) -- (1,2.4); 
            \draw[-{Latex[length=2mm]}, myred, thick] (4,3.2) -- (5,3.2); 
            \draw[-{Latex[length=2mm]}, myred, thick] (1,2.9) -- (2,2.9); 
            \draw[-{Latex[length=2mm]}, myred, thick] (2,1.7) -- (3,1.7); 
            \draw[-{Latex[length=2mm]}, myred, thick] (3,0.6) -- (2,0.6); 
            \draw[-{Latex[length=2mm]}, myred, thick] (4,0.9) -- (5,0.9); 

            \node[above] at (3,4.2) {(a) Original Process};
        \end{tikzpicture}
    \end{subfigure}
    \begin{subfigure}[b]{0.32\textwidth}
        \centering
        \begin{tikzpicture}[scale=0.55, transform shape, xscale=1.1, yscale=1.2]
            % Axis
            \draw[->, thick] (0,0) -- (0,4.2) node[above] {time};
            \draw[-, thick] (0,0) -- (5.5,0);
            
            \node[below] at (1,0) {\small $-2$};
            \node[below] at (2,0) {\small $-1$};
            \node[below] at (3,0) {\small $0$}; 
            \node[below] at (4,0) {\small $1$};
            \node[below] at (5,0) {\small $2$};
            \foreach \x in {1,...,5} {
                \draw[dashed, softgrid] (\x,0) -- (\x,4);
            }
            
            \foreach \x in {1/2,1,3/2,2,5/2,3,7/2} {
                \draw[dashed, tline, thick] (0.5,\x) -- (5.5,\x) node[right, text=black] {\small $t=\x$};
            }

            \draw[line width=6pt, boldgreen] (3,0) -- (3,0.5);
            \draw[line width=6pt, boldgreen] (4,0.5) -- (4,1.0);
            \draw[line width=6pt, boldgreen] (2,0.5) -- (2,1.0);
            \draw[line width=6pt, boldgreen] (5,1.0) -- (5,1.5);
            \draw[line width=6pt, boldgreen] (4,1.5) -- (4,2.0);

            \draw[-{Latex[length=2.5mm]}, boldgreen, line width=1.5pt] (3,0.25) -- (4,0.75);
            \draw[-{Latex[length=2.5mm]}, boldgreen, line width=1.5pt] (3,0.25) -- (2,0.75);
            \draw[-{Latex[length=2.5mm]}, boldgreen, line width=1.5pt] (5,1.25) -- (4,1.75);
            \draw[-{Latex[length=2.5mm]}, boldgreen, line width=1.5pt] (2,0.25) -- (1,0.75);
            \draw[-{Latex[length=2.5mm]}, boldgreen, line width=1.5pt] (4,2.25) -- (3,2.75);
            \draw[-{Latex[length=2.5mm]}, boldgreen, line width=1.5pt] (2,1.25) -- (1,1.75);
            \draw[-{Latex[length=2.5mm]}, boldgreen, line width=1.5pt] (2,2.25) -- (1,2.75);
            \draw[-{Latex[length=2.5mm]}, boldgreen, line width=1.5pt] (4,3.25) -- (5,3.75);
            \draw[-{Latex[length=2.5mm]}, boldgreen, line width=1.5pt] (1,2.75) -- (2,3.25);
            \draw[-{Latex[length=2.5mm]}, boldgreen, line width=1.5pt] (2,1.75) -- (3,2.25);
            \draw[-{Latex[length=2.5mm]}, boldgreen, line width=1.5pt] (3,0.75) -- (2,1.25);
            \draw[-{Latex[length=2.5mm]}, boldgreen, line width=1.5pt] (4,0.75) -- (5,1.25);

            \node[above] at (3,4.2) {(b) Oriented Percolation};
        \end{tikzpicture}
    \end{subfigure}
    \caption{Graphical construction of the survival coupling in Remark~\ref{rmk:fractional_window}. \textbf{(a)} The CPDC started from a single infection at \(x=0\). Gray arrows carry unsuccessful Bernoulli marks, whereas red arrows carry successful marks. \textbf{(b)} The associated oriented percolation configuration. Every green oriented path starting from the initial seed corresponds to an infection path in the original process.}
    \label{fig:global_survival_coupling}
\end{figure}

\begin{remark}[Uniformity in \(p\)]\label{rmk:uniformity}
    The proof counts every unmarked graphical arrow as potentially causal and therefore disregards both the target state and the Bernoulli mark. The bound \(\lambda<1/(2d)\) is consequently uniform in \(p\), including the fully resetting case \(p=1\).
\end{remark}

The preceding extinction and survival arguments also remain valid when the resetting probability varies with time. That is, let \(\mathbf p:\mathbb R_+\to[0,1]\) be a Borel measurable function and consider a modification of the contact process with deterministic cure such that a reinfection attempt arriving at time \(t\) is accepted with probability \(\mathbf p(t)\). We refer to this process as CPDC$(\lambda,\mathbf{p})$.

\begin{corollary}
Let \(\mathbf p:\mathbb R_+\to[0,1]\) be Borel measurable, and consider the CPDC\((\lambda,\mathbf p)\). Items~\(1\)--\(3\) of Theorem~\ref{thm:main_results} remain valid, with the same bounds, for this time-dependent resetting rule.
\end{corollary}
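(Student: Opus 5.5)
The plan is to check that the proofs of Items~1--3 never use that the reset probability is constant in time. They use only two structural facts about the graphical construction, and both survive a time-dependent rule. First, I will set up the graphical construction for CPDC$(\lambda,\mathbf p)$: keep the Poisson arrows $\mathcal N^{x,y}$ of rate $\lambda$, and give each arrow arriving at time $t$ an independent mark $U\sim\mathrm{Unif}[0,1]$. The reset at that arrow is accepted iff $U\le \mathbf p(t)$. Borel measurability of $\mathbf p$ makes the event $\{U\le\mathbf p(t)\}$ measurable, so the construction is well defined. It has the same finite-range, a.s. locally finite structure as in the constant case. Conditionally on the arrow times, the acceptance indicators are still independent Bernoulli$(\mathbf p(t))$ variables.

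For Items~1 and~2, I will invoke the causal-chain argument exactly as in Remark~\ref{rmk:uniformity}. The first-moment bound counts every arrow as potentially causal and ignores both the target state and the mark. The key observation to verify is that if a site $x$ is infected at time $t$, then there is a chain of arrows. This chain goes backward from $(x,t)$, through consecutive infection or reset events, each separated by time at most $1$, down to an initially infected site (or, for Item~2, to time $0$). This statement is purely about deadlines and does not involve the marks. Hence the expected number of such chains is bounded by the same sum $\sum_n (2d\lambda)^n\cdot(\text{volume of the time simplex})$ as before, and this sum is finite and decays when $2d\lambda<1$. The bound is independent of $\mathbf p$, so extinction from finite configurations and local convergence to $\delta_\emptyset$ follow verbatim. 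The one point to state explicitly is that the bound is taken pathwise over all markings.

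For Item~3, I expect the oriented percolation exploration of Section~\ref{sec:survival} to use only first infective windows, that is, arrows arriving at healthy targets. Those arrows always infect, regardless of the mark. The percolation parameter is $1-e^{-\lambda}$, with no $p$ in it, which confirms that marks are not used. The main obstacle is checking that the exploration's independence structure does not secretly depend on stationarity in time. Its open/closed bond variables are defined from Poisson arrows in disjoint space-time boxes, and those arrows are independent of the marks. If the exploration does consult marks anywhere (for example, to decide whether a target is shielded), I will replace that step by a worst case over all marks. That is, I will show the bond is declared open using an event measurable with respect to the arrows alone, valid for every realization of acceptances. Then the domination by supercritical oriented percolation holds uniformly in $\mathbf p$. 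Survival with positive probability follows when $\lambda>-\log(1-p_c^{\mathrm{or}})$ and $d\ge2$.
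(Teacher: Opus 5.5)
Your proposal is correct and is essentially the paper's argument: the causal-chain count behind Items~1--2 disregards marks and target states, and the percolation bonds in Item~3 depend only on the unmarked first-episode clock times $W_{x,i}\sim\operatorname{Exp}(\lambda)$, so neither argument sees $\mathbf p$. A few details of your description need correcting. The chain bound comes from the unit cube $(0,1]^n$ of inter-arrival gaps, which gives $|A|(2d\lambda)^n$ and hence the condition $2d\lambda<1$, not from a time simplex. The relevant window is the \emph{source's} first interval $[S_1^x,S_1^x+1)$, which resets can only lengthen, rather than arrows landing on healthy targets. That window is read off the episode-age clocks $\mathcal N^{x,1,x+e_i}$, not fixed space-time boxes. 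In that construction, your uniform-mark rule (accept iff $U\le\mathbf p(T_k^x+s)$) is exactly what keeps it valid for time-dependent $\mathbf p$.
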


\begin{proof}
The causal-chain argument disregards the Bernoulli marks, while the survival construction uses only the first infection interval of each vertex. Neither argument requires the resetting probability to be constant.
\end{proof}

The attractiveness of CPDC\((\lambda,1)\), together with
Theorem~\ref{thm:main_results}, yields the existence of a phase transition
in \(\lambda\) for this value of \(p\). Motivated by the numerical results in Figure~\ref{fig:interpolaharm}, we propose the following conjecture for the critical curve.

\begin{conjecture}
For any fixed \(p\in[0,1]\), there is a phase transition in \(\lambda\); that is, there exists a parameter \(\lambda_c(p)\) such that the CPDC\((\lambda,p)\) becomes extinct for $\lambda<\lambda_{c}(p)$ and survives with positive probability for $\lambda>\lambda_{c}(p)$. Moreover, the critical curve is given by the weighted harmonic interpolation
\[
	\lambda_c(p)
	=
	\frac{\lambda_c(0)\lambda_c(1)}
	{(1-p)\lambda_c(1)+p\lambda_c(0)}.
\]
Numerically \(\lambda_c(0)\approx0.40755\) and \(\lambda_c(1)\approx0.33247\) for \(\mathbb{Z}^{2}\).
\end{conjecture}

\begin{figure}[!h]
    \centering
    \includegraphics[width=0.65\linewidth]{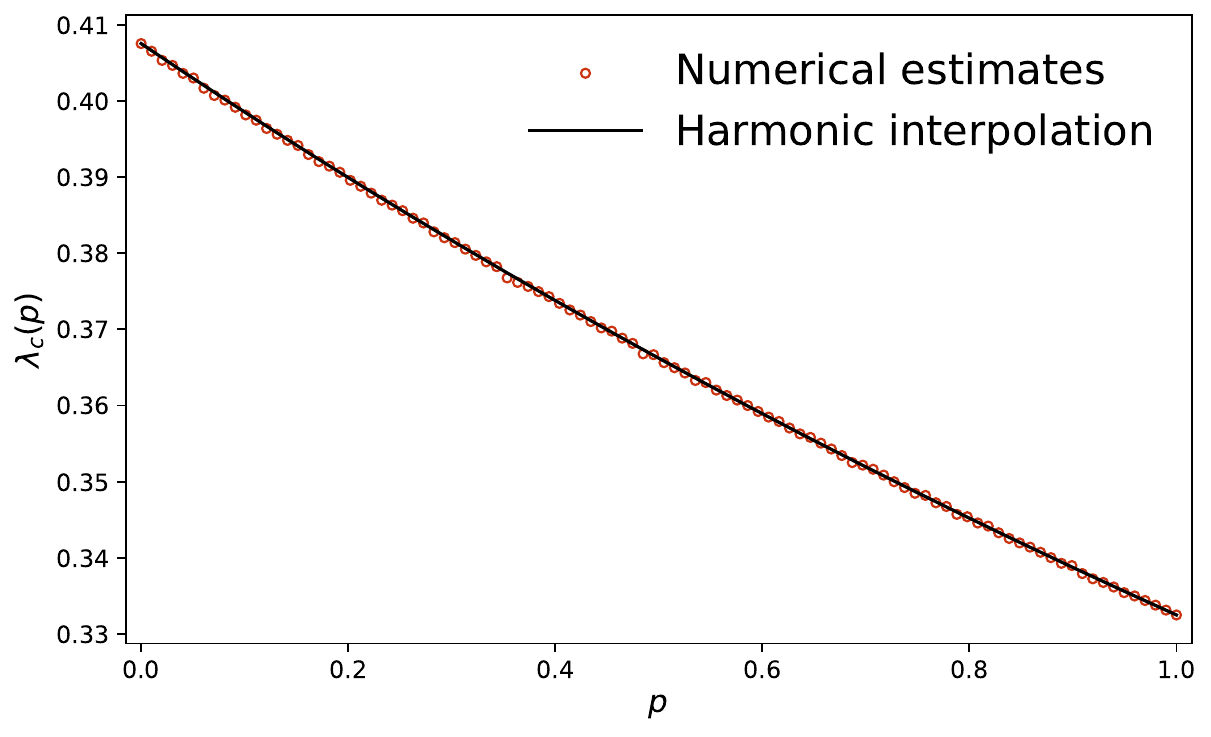}
\caption{Simulation-based estimates of the critical curve
\(\lambda_c(p)\) on \(\mathbb Z^2\), together with the
conjectured harmonic interpolation. A bootstrap goodness-of-fit test did
not reject the weighted harmonic interpolation at the \(5\%\) level.}
\label{fig:interpolaharm}
\end{figure}

Recent developments in interacting particle systems have explored non-Markovian dynamics, particularly processes under random environments, multi-stage infections, or dynamic edges \cite{fontes2019renewal, hilario2022results, lanchier2026contact}. While models with heavy-tailed renewals often preserve monotonicity, our model isolates the distinctive effects of deterministic recovery times.

The paper is organized as follows. Section \ref{sec:extinction} gives the graphical construction and the subcritical arguments via causal-chain bounds and perfect simulation. Section \ref{sec:survival} proves global survival via an oriented percolation argument using only the first infective window. Section \ref{sec:density} derives the delayed density identity for the case \(p=0\) and explains why the same one-window representation does not extend directly to \(p>0\).

\section{Extinction: Graphical Construction and Causal Chains}\label{sec:extinction}

To prove extinction, we use the usual Harris graphical construction. Let \(\{\mathcal N^{x,y}:x\sim y\}\) be independent Poisson processes of rate \(\lambda\). To each arrival point at time \(t\), we attach an independent uniform random variable \(U_{t}^{x,y} \sim \mathcal{U}(0,1)\). An arrow \(x\to y\) is effective only if \(\eta_{t-}(x)=1\). If an effective arrow arrives at a healthy vertex, it becomes infected and its recovery is scheduled at \(t+1\). If the vertex is already infected, the attempt successfully resets the clock to \(t+1\) if, and only if, \(U_{t}^{x,y} \le p\).

\subsection{Causal-chain bound}

For a finite set \(B\subset\mathbb Z^d\), let \(C_n(B)\) be the number of sequences
\[
	x_0\in B,\qquad x_i\sim x_{i-1},\qquad
	0=s_0<s_1<\cdots<s_n,
\]
such that
\[
	s_i\in\mathcal N^{x_{i-1},x_i},
	\qquad 0<s_i-s_{i-1}\leq1.
\]
We set \(C_0(B)=|B|\). For each nearest-neighbor walk, the change of variables \(u_i=s_i-s_{i-1}\) identifies the admissible time region with \((0,1]^n\). Hence the multivariate Campbell formula gives
\[
\begin{aligned}
	\mathbb E[C_n(B)]
	&=
	\sum_{x_0\in B}
	\sum_{x_1\sim x_0}\cdots
	\sum_{x_n\sim x_{n-1}}
	\int_{(0,1]^n}\lambda^n\,du_1\cdots du_n \\
	&=|B|(2d\lambda)^n.
\end{aligned}
\]
The formula remains valid when an oriented edge is traversed more than once, since the arrow times are strictly ordered.

\subsection{Finite initial configurations}

\begin{proof}[Proof of global extinction from finite initial configurations]
Fix a finite initial infected set \(A\subset\mathbb Z^d\), with residual recovery times in \((0,1]\). Call an effective arrow \emph{deadline-setting} if it infects a healthy target or produces an accepted reset. In either case, an arrow at time \(s\) sets the target's recovery deadline equal to \(s+1\).

Tracing the ancestry of any infected space--time point backward through deadline-setting arrows yields one of the causal chains counted above. This backward tracing is non-explosive (see \cite{bhattacharya1990stochastic}): if \(R_n(T)\) denotes the number of graphical paths of length \(n\) starting from \(A\) and contained in \([0,T]\), then
\[
	\mathbb E[R_n(T)]
	=
	|A|\frac{(2d\lambda T)^n}{n!},
\]
and therefore \(\mathbb E[\sum_{n\geq0}R_n(T)]=|A|e^{2d\lambda T}<\infty\). 

If \(2d\lambda<1\), Tonelli's theorem gives
\[
	\mathbb E\left[\sum_{n\geq0}C_n(A)\right]
	=
	\frac{|A|}{1-2d\lambda}<\infty.
\]
Thus only finitely many deadline-setting arrows are causally reachable from \(A\). If \(S\) is the time of the last one, with \(S=0\) when none exists, every recovery deadline is at most \(S+1\). Consequently,
\[
	\eta_t^A=\varnothing
	\qquad\text{for all }t>S+1.
\]
Since the argument counts all unmarked arrows, it applies uniformly to every \(p\in[0,1]\).
\end{proof}

\subsection{Backward Construction and Perfect Simulation}

Fix a finite set \(K\subset\mathbb Z^d\). Starting from \(K\times\{0\}\), explore backward all sequences
\[
	x_0\in K,\qquad x_i\sim x_{i-1},\qquad
	0=s_0<s_1<\cdots<s_n,
\]
such that
\[
	-s_i\in\mathcal N^{x_i,x_{i-1}},
	\qquad 0<s_i-s_{i-1}\leq1.
\]

This construction uses only the unmarked Poisson environment. It plays the role of the \textit{free process} in the perfect simulation scheme of Ferrari, Fernández, and Garcia \cite{ferrari2002perfect}. Because it disregards the target states and the Bernoulli reset marks, this free environment strictly dominates our process and contains every possible causal ancestry. The true process can be recovered by a forward thinning of this free process. Let $\mathcal A^K$ denote the resulting free clan of ancestors. Figure~\ref{fig:clan_ancestors} illustrates this backward construction.

\begin{figure}[!ht]
    \centering
    \begin{tikzpicture}[scale=0.8,xscale=1.5, yscale=1.2]
        \draw[->, thick] (0,0) -- (0,4.2) node[above] {time};
        \draw[-, thick] (0,3.7) -- (4.5,3.7);

        \foreach \x/\name in {1/x_{-1}, 2/x, 3/x_{1}, 4/x_2} {
            \draw[dashed, gray] (\x,0) -- (\x,3.7);
        }

        \draw[line width=3pt, blue] (2,3.7) -- (2,0.6);

        \draw[-{Latex[length=2mm]}, red, thick] (2,3.2) -- (1,3.2);
        \draw[-{Latex[length=2mm]}, red, thick] (3,2.5) -- (2,2.5);
        \draw[-{Latex[length=2mm]}, red, thick] (3,1.6) -- (2,1.6);

        \draw[-{Latex[length=2mm]}, gray, thick] (1,0.5) -- (2,0.5);
        \node[gray, right] at (0.93, 0.3) {\scriptsize $t_4 - t_3 > 1$};

        \draw[line width=3pt, blue] (1,3.4) -- (1,2.4);
        \draw[-{Latex[length=2mm]}, red, thick] (1,3.4) -- (2,3.4);
        
        \draw[line width=3pt, blue] (3,2.5) -- (3,0.9);
        \draw[-{Latex[length=2mm]}, red, thick] (4,1.9) -- (3,1.9);

        \draw[dotted, gray] (0,3.4) -- (1,3.4);
        \node[left] at (0,3.4) {$-t_1$};

        \draw[dotted, gray] (0,2.5) -- (2,2.5);
        \node[left] at (0,2.5) {$-t_2$};

        \draw[dotted, gray] (0,1.6) -- (2,1.6);
        \node[left] at (0,1.6) {$-t_3$};

        \draw[dotted, gray] (0,0.5) -- (1,0.5);
        \node[left] at (0,0.5) {$-t_4$};

    \end{tikzpicture}
    \caption{Backward graphical construction of the free clan of ancestors, corresponding to the potential causal chains. Thick blue lines represent the active backward search intervals of length 1, while red arrows indicate valid deadline-setting arrivals. The gap between arrivals $-t_3$ and $-t_4$ is $1.1 > 1$. Therefore, the search interval on $x$ expires at $-t_3 - 1$, and the arrival at $-t_4$ is ignored. The resulting break in the causal chain illustrates the finite temporal depth of the clan.}
    \label{fig:clan_ancestors}
\end{figure}

Let \(C_n^-(K)\) denote the number of backward sequences of length \(n\).
By time reversal and the preceding computation,
\[
	\mathbb E[C_n^-(K)]=|K|(2d\lambda)^n.
\]
Hence, if \(2d\lambda<1\), then
\[
	\mathbb E\left[\sum_{n\geq0}C_n^-(K)\right]
	=
	\frac{|K|}{1-2d\lambda}<\infty.
\]
Thus \(\sum_{n\geq0}C_n^-(K)<\infty\) almost surely, and hence
\(\mathcal A^K\) is almost surely finite.
Define its maximal backward temporal depth by
\[
	D_K:=1+\sup\{s_n:\text{there exists a backward causal chain ending at }-s_n\},
\]
with the supremum equal to \(0\) when there is no arrow.
The additional \(1\) accounts for the maximal residual lifetime.
Then \(D_K<\infty\) almost surely and
\[
	\mathbb P(D_K>t)\longrightarrow0.
\]

Consider now the process started at time \(0\) from an arbitrary configuration \(\xi\in\{0,1\}^{\mathbb{Z}^{d}}\). If some site of \(K\) is infected at time \(t\), tracing its successive recovery deadlines backward produces a potential causal chain reaching time \(0\). By time translation,
\[
	\sup_{\xi}
	\mathbb P\bigl(\eta_t^{\xi}\cap K\neq\varnothing\bigr)
	\leq
	\mathbb P(D_K\geq t)
	\longrightarrow0.
\]
This proves local extinction in the sense of convergence to the empty configuration in the local topology.

The same construction gives perfect simulation in every finite window in the sense of \cite{ferrari2002perfect}. Because the free clan of ancestors is almost surely finite, the backward search terminates at the finite temporal depth $D_K$. Evaluating the process forward from any time prior to $-D_K$, every admissible initial state yields the empty state on $K$ at time $0$. Since there are no spontaneous infections, the exact stationary state on $K$ is empty.

If \(\mu\) is an invariant probability measure for the augmented process, invariance and the preceding uniform bound give, for every finite \(K\),
\[
	\mu(\eta\cap K\neq\varnothing)
	\leq \mathbb P(D_K\geq t)\longrightarrow0.
\]
Thus \(\mu=\delta_\emptyset\), which is therefore the unique invariant probability measure.

\section{Global Survival: The Oriented Percolation Argument}
\label{sec:survival}

\begin{proof}[Proof of global survival from finite initial configurations]
Assume \(d\geq2\), fix \(\lambda>0\) and \(p\in[0,1]\), and let
\(B\subset\Z^d\) be finite and nonempty. Write \(\mathbb P_B\) for the law
of the process started from \(B\), and let \(\tau\) be its extinction time,
with \(\tau=\infty\) if the infected set never becomes empty. Choose
\(z\in B\) and two distinct standard basis vectors \(e_1,e_2\); this is the
only place where \(d\geq2\) is used.

For \(x\in\Z^d\), let \(S_1^x\) be its first infection time, with
\(S_1^x=\infty\) if \(x\) is never infected. Since an accepted reset only
postpones recovery, the first infection episode of \(x\), when it exists,
contains
\[
    [S_1^x,S_1^x+1).
\]

We use an episode-age graphical construction. For every \(x\in\Z^d\),
\(k\geq1\), and neighbor \(y\) of \(x\), let
\(\mathcal N^{x,k,y}\) be an independent marked Poisson process on
\((0,\infty)\times\{0,1\}\) with intensity
\[
    \lambda\,ds\otimes\bigl((1-p)\delta_0+p\delta_1\bigr).
\]
The index \(k\) labels the infection episodes of \(x\). If the \(k\)-th
episode begins at time \(T_k^x\), then \(T_1^x=S_1^x\) whenever
\(S_1^x<\infty\). A point \((s,u)\) of \(\mathcal N^{x,k,y}\) represents a
transmission attempt from \(x\) to \(y\) at time \(T_k^x+s\), provided that
the episode is still active. If \(y\) is already infected, \(u\) determines
whether the corresponding reset is accepted.

All clock families are assigned in advance. When the \(k\)-th episode of
\(x\) begins, the revealed history determines the index \((x,k)\), while
the families \((\mathcal N^{x,k,y})_{y\sim x}\) have not been inspected.
By product independence, conditionally on this history, these families
have their original joint law and are independent of the previously
inspected coordinates. Activating them and proceeding chronologically
therefore gives the CPDC\((\lambda,p)\) up to its possible explosion time
\(\zeta\).

We briefly exclude explosion. For \(t<\zeta\), let \(D_t\) be the number of
distinct vertices infected by time \(t\). At most \(D_t\) vertices are
infected at time \(t\), each generating transmission attempts at total rate
\(2d\lambda\). Thus \(D\) is dominated, up to \(\zeta\), by a Yule process
\(Y\) with birth rate \(2d\lambda\) per particle, started from \(|B|\)
particles. Since \(Y\) does not explode, on \(\{\zeta<\infty\}\),
\[
    D_{\zeta-}\leq Y_\zeta<\infty.
\]
Successive episodes of the same vertex begin at least one unit of time
apart. Hence, if \(\zeta<\infty\), only finitely many episode-clock families
are activated before \(\zeta\). Each such family contributes finitely many
points in any bounded age interval, and each episode contributes at most one
recovery time. Thus only finitely many events can occur before \(\zeta\),
contradicting the definition of the explosion time. Therefore
\(\zeta=\infty\) almost surely.

For \(x\in\Z^d\) and \(i\in\{1,2\}\), define
\[
    W_{x,i}
    :=
    \inf\bigl\{
        s>0:
        \mathcal N^{x,1,x+e_i}\bigl((0,s]\times\{0,1\}\bigr)>0
    \bigr\},
    \qquad \inf\varnothing:=\infty.
\]
Since the mark measure is a probability measure, the time projection of
each \(\mathcal N^{x,1,x+e_i}\) is a rate-\(\lambda\) Poisson process.
These are distinct coordinates of the product space, so \(W_{x,i}\) is
defined whether or not \(x\) is ever infected and
\[
    (W_{x,i})_{x\in\Z^d,\,i\in\{1,2\}}
\]
is an i.i.d.\ family of \(\operatorname{Exp}(\lambda)\) random variables.

Consider the oriented sublattice
\[
    \mathbb L_+
    :=
    \{z+ae_1+be_2:a,b\in\Z_+\}.
\]
Declare the oriented edge \(x\to x+e_i\) open when \(W_{x,i}<1\). This
defines oriented bond percolation on
\(\mathbb L_+\cong\Z_+^2\) with parameter
\[
    q_\lambda=1-e^{-\lambda}.
\]
Let \(\mathcal C(z)\) denote the open oriented cluster of \(z\). We claim
that
\[
    \mathcal C(z)
    \subseteq
    \{x\in\Z^d:S_1^x<\infty\}.
\]

Indeed, argue by induction on the level \(a+b\) of
\(x=z+ae_1+be_2\). Since \(z\in B\), we have \(S_1^z=0\). If
\(x+e_i\) is reached from \(x\in\mathcal C(z)\) by an open edge, then
\(S_1^x<\infty\) by induction and \(W_{x,i}<1\). During the first episode
of \(x\), the clock associated with \(x\to x+e_i\) therefore rings at time
\[
    S_1^x+W_{x,i}<S_1^x+1,
\]
while \(x\) is still infected. Consequently,
\[
    S_1^{x+e_i}\leq S_1^x+W_{x,i}<\infty,
\]
which proves the claim.

Let \(p_c^{\mathrm{or}}\) be the critical parameter for oriented bond
percolation on \(\Z_+^2\) \cite{durrett1984oriented}. If
\[
    q_\lambda>p_c^{\mathrm{or}},
\]
then
\[
    \mathbb P_B\bigl(|\mathcal C(z)|=\infty\bigr)>0.
\]
On this event, infinitely many distinct vertices are eventually infected.
Since \(D_t<\infty\) for every finite \(t\), their first infection times are
unbounded. As the empty configuration is absorbing, this implies
\(\tau=\infty\). Therefore,
\[
    \mathbb P_B(\tau=\infty)
    \geq
    \mathbb P_B\bigl(|\mathcal C(z)|=\infty\bigr)
    >0.
\]

Thus the CPDC\((\lambda,p)\) survives globally with positive probability
whenever
\[
    \lambda>-\log\bigl(1-p_c^{\mathrm{or}}\bigr).
\]
The bound is uniform in \(p\in[0,1]\) and in the finite nonempty set \(B\),
since the argument uses only that resets cannot shorten the first infective
window.
\end{proof}

\section{Density Identity and Finite-Time Nondegeneracy}\label{sec:density}

Let \((\eta_t)_{t \ge 0}\) be the pure CPDC (\(p=0\)) initiated with a product Bernoulli measure \(\nu_{\rho_{0}}\), where \(\rho_{0} \in (0,1)\). By translation invariance and the symmetries of \(\mathbb Z^d\), the
macroscopic density
\(\rho(t)=\mathbb P[\eta_t(x)=1]\) is independent of the site \(x\).
Moreover, for any ordered nearest-neighbor pair \(y\sim x\), define
\[
\rho_{10}(t)=\mathbb P[\eta_t(y)=1,\eta_t(x)=0],
\]
which is independent of the chosen ordered pair.

\begin{proof}[Proof of Theorem~\ref{thm:main_results}, delayed density identity]
Let \(I_x(a,b]\) denote the number of successful infection arrows into \(x\) during \((a,b]\). Because \(p=0\), every effective infection produces an infection interval of length exactly \(1\), and no intervals overlap. Thus, for \(t>1\), \(\eta_t(x)=I_x(t-1,t]\) almost surely. 

The predictable intensity of the effective infection process at \(x\) is
\begin{equation*}
    \lambda\mathbf 1_{\{\eta_{s-}(x)=0\}} \sum_{y\sim x}\mathbf 1_{\{\eta_{s-}(y)=1\}}.
\end{equation*}
Taking expectations in the compensator identity gives
\begin{equation*}
    \rho(t) = \mathbb E[I_x(t-1,t]] = \lambda\sum_{y\sim x}\int_{t-1}^{t} \mathbb P(\eta_{s-}(y)=1,\eta_{s-}(x)=0)\,ds.
\end{equation*}

By translation invariance, this becomes \(2d\lambda\int_{t-1}^{t}\rho_{10}(s)\,ds\). 

Since \(0\leq\rho_{10}\leq1\), the integral representation shows directly that \(\rho\) is locally Lipschitz, hence locally absolutely continuous, on \((1,\infty)\). The fundamental theorem of calculus therefore gives
\[
\rho'(t)=2d\lambda\rho_{10}(t)-2d\lambda\rho_{10}(t-1)
\]
for Lebesgue-a.e. \(t>1\).

Recall that every initially infected site recovers at time \(1\); hence
\(\rho(t)\geq\rho_0>0\) for \(t<1\). At \(t=1\), fix \(y\sim x\) and let
\(
    E:=\{\eta_0(x)=0,\ \eta_0(y)=1,\
    \mathcal N^{y,x}((0,1))\geq1\}.
\) 
On \(E\), the first arrow from \(y\) to \(x\) occurs at some
\(s\in(0,1)\) while \(y\) is infected. At that time, \(x\) is either
already infected or becomes infected. Since \(x\) was initially healthy,
in either case its infection began after time \(0\) and, under \(p=0\),
lasts beyond time \(1\). Hence \(E\subseteq\{\eta_1(x)=1\}\), and independence gives
\[
    \rho(1)\geq\mathbb P(E)=\rho_0(1-\rho_0)(1-e^{-\lambda})>0.
\]
For the upper bound, if \(x\) is initially healthy and receives no
incoming arrow in \((0,t]\), then \(\eta_t(x)=0\). This event has
probability \((1-\rho_0)e^{-2d\lambda t}>0\), so \(\rho(t)<1\) for every
finite \(t\).

For fixed \(s\), the law at time \(s\) is a translation-ergodic image
of the initial product field and the independent graphical field.  If
\(\rho_{10}(s)=0\), lattice symmetry gives zero probability of either
orientation of a nearest-neighbor disagreement, while the converse is immediate, so 
\[
\rho_{10}(s)=0\iff\rho(s)\in\{0,1\}.
\]  
By countability and connectedness of \(\mathbb Z^d\), the configuration is then almost surely
spatially constant; ergodicity and the already proved inequality
\(\rho(s)<1\) force \(\rho(s)=0\).

Now suppose that $\rho(t_0)=0$ for some $t_0>1$. The integral identity implies that $\rho_{10}(s)=0$ for Lebesgue-a.e. $s\in(t_0-1,t_0)$, and hence the preceding spatial ergodicity argument yields $\rho(s)=0$ for almost every such $s$. However, we established above that $\rho$ is locally absolutely continuous, and thus continuous, on $(1, \infty)$. Being continuous and equal to zero almost everywhere on $(t_0-1, t_0)$, $\rho$ must be identically zero on the entire closed interval $[t_0-1, t_0]$ (intersected with $(1, \infty)$). Applying this property iteratively backward in steps of length 1, we force $\rho(s)=0$ for some $s \in (0, 1]$, which directly contradicts the already established lower bound $\rho(s) \geq \rho_0 > 0$ for $s < 1$. Therefore, $\rho(t)>0$ for every finite $t$.
\end{proof}

\begin{remark}[Density for \(p > 0\)]
  We emphasize that this delayed balance identity relies on the non-resetting rule. For \(p>0\), a vertex recovers at time \(t\) only if its current deadline was set at time \(t-1\) and no subsequent effective attempt was accepted. Thus recovery depends on the marked history over the entire interval \((t-1,t]\), and the one-window argument used above no longer yields a closed identity for \(\rho\).
\end{remark}

\section*{Acknowledgements}
D.A.L. is supported by a S\~ao Paulo Research Foundation (FAPESP) postdoctoral fellowship (grant 2025/02013-0), and N.L.G. is partially supported by CNPq (grant 306496/2024-0). In addition, D.A.L. and N.L.G. acknowledge support from the FAPESP Thematic Project ``Modelagem de sistemas estocásticos'' (grant 2023/13453-5).

\end{document}